\newtheorem{thm}{Theorem}%[section]
\newcounter{cptRef}
\newcommand\myAffiliation[1]{\ref{refAffiliation:#1}}
\renewcommand\author[3]{{\em #1 #2}\ $^{\mbox{\scriptsize\myAffiliation{#3}}}$}
\newcommand\affiliationWithoutReferenceToAuthor[1]{#1}
\newcommand\authorWithoutReferenceToAffiliation[2]{{\em #1 #2}}
\renewcommand\title[1]{{\Large #1}}
\newenvironment{titleArea}{\begin{center}}{\end{center}}
\newcommand{\rr}{\mathbb{R}}
\newcommand{\todo}[1]{{\bf \texttt{TODO: #1}}}
\theoremstyle{plain}
\theoremstyle{definition}
\begin{document}

\begin{titleArea}

\title{

First non-trivial upper bound on the circular chromatic number of the plane.
}

\medskip

%% Authors
%\authorWithoutReferenceToAffiliation{Konstanty}{Junosza-Szaniawski},
%\authorWithoutReferenceToAffiliation{Joanna}{SokĂłĹ‚} and
%\authorWithoutReferenceToAffiliation{Krzysztof}{WÄ™sek}

%% One or more affiliations
%\affiliationWithoutReferenceToAuthor{Faculty of Mathematics and Information Science,\\ Warsaw University of Technology, Poland}

%% Authors
\authorWithoutReferenceToAffiliation{Konstanty}{Junosza-Szaniawski},

\medskip

%% One or more affiliations

\affiliationWithoutReferenceToAuthor{Faculty of Mathematics and Information Science,\\ Warsaw University of Technology, Poland}

\end{titleArea}

\begin{abstract}
We consider circular version of the famous Nelson-Hadwiger problem. It is know that 4 colors are necessary and 7 colors suffice to 
color the euclidean plane in such a way that points at distance one get different colors. In $r$-circular coloring we assign arcs of length one of a circle with a perimeter $r$ in such a way that points at distance one get disjoint arcs. In this paper we show the existence of $r$-circular coloring for  $r=4+\frac{4\sqrt{3}}{3}\approx 6.30$. It is the first result with $r$-circular coloring of the plane with $r$ smaller than 7. We also show $r$-circular coloring of the plane with $r<7$ in the case when we require disjoint arcs for points at distance belonging to the internal $[\frac{10}{11}, \frac{12}{11}]$. 
\end{abstract}

\section{Introduction}

We refer to the famous Nelson-Hadwiger problem and a well studied coloring model - circular coloring. 
The Nelson Hadwiger problem is the question for the chromatic number of the euclidean  plane, which is the minimum number of colors required to color the plane in such a way  that no two points at distance 1 from each other have the same color. The exact number is not known. We only know that at least 4 colors are needed \cite{moser} and 7 colors suffice \cite{fisher}. For a comprehensive history of the Hadwiger-Nelson problem see a monograph \cite{soifer}.

An $r$-circular coloring of a graph $G=(V,E)$ is a function $c:V\to [0,r)$  such that for any edge $uv$ of $G$ holds $1\le |c(u)-c(v)|\le r-1$. Notice that an $r$-circular coloring can be seen as an assignment of arcs of length 1 of a circle with perimeter $r$ to vertices of $G$ in such a way that adjacent vertices get disjoint arcs. The circular chromatic number of a graph $G$ is the number  $\chi_c(G)=\inf\{r\in \rr: \text{ there exists } r\text{-circular coloring of } G\}$.   Circular coloring was first introduced by Vince \cite{vince}. For a survey see paper by Zhu \cite{zhu}. Circular coloring applies in scheduling theory, to minimize the average time of process, that are repeated many times. It is known (\cite{vince}) that the circular chromatic number does not exceed chromatic number, but is bigger than the chromatic number minus one, formally $\chi_c(G)\in (\chi(G)-1, \chi(G)]$ for any graph $G$. 

In this paper we consider the circular coloring of the plane. Precisely  the circular coloring of infinite graph $G_0$, with the set of all points in the plane as the vertex set and the set of all pairs of points at distance one as the set of edges.  If we combine known bounds for the chromatic number of the graph $G_0$ with properties of the circular chromatic number we obtain $3<\chi_c(G_0)\le 7$. The lower bound can be improved by comparing with another interesting parameter - fractional chromatic number. To define it we need to define first $j$-fold coloring of the plane. A function is a $j$-fold coloring if it assigns a $j$-element set of natural numbers to the vertices of a graph $G$ in such a way that adjacent vertices get disjoint sets. The fractional chromatic number of  $G$ is defined by 

\[\chi_f(G)=\inf\{\frac{k}{j}: \text{ there exists } j\text{-fold coloring of }G \text{ using }k\text{ colors.}\}\]

It is known \cite{zhu} that the fractional  chromatic number does not exceed circular chromatic number. The best known lower bound for the  fractional chromatic number of $G_0$ is $\frac{32}{9}\approx 3.55$ and it can be found in the book of Scheinerman and Ullman \cite{fgt}. This gives us a lower bound on the circular chromatic number of the plane.  DeVos, Ebrahimi, Ghebleh,  Goddyn,  Mohar,  Naserasr \cite{circ} improved this bound by showing that the chromatic number of the plane is at least 4. In this paper we give first non-trivial upper bound on the circular chromatic number of the plane.

Exoo considered more restricted coloring of the plane in \cite{exoo}. He asked for the minimum number of colors to color the plane in such a way that any points in at distance belonging to a given interval $[1-\epsilon, 1+\epsilon]$ get different colors. For $\epsilon=0$ the problem reduces to Nelson-Hadwiger problem. The fractional and the $j$-fold Exoo type coloring was studied in \cite{my}.  The method of circular coloring of the plane presented in this paper can be adapted to Exoo type coloring of the plane.

\section{Main results}

Let us introduce some  definitions and notation. For $(x_1,y_1),(x_2,y_2)\in \rr^2$ by $d((x_1,y_1),(x_2,y_2))=\sqrt{(x_1-x_2)^2+(y_1-y_2)^2}$ we denote the euclidean distance. 
For $x\in \rr$ and $\ell\in \rr_+$ we define $\lfloor x\rfloor _\ell=\lfloor \frac{x}{\ell}\rfloor \cdot \ell$ and $(x)_\ell=x-\lfloor x\rfloor _\ell$. Notice that for $\ell =1$ function $\lfloor x\rfloor_\ell$ is the standard floor function  $\lfloor x \rfloor$.  
By $G_\varepsilon$ we denote an infinite graph with the vertex set $\rr^2$ and the set of edges $\{(x_1,y_1)(x_2,y_2): d((x_1,y_1),(x_2,y_2))\in [1-\varepsilon,1+\varepsilon]\}$. Our main result is the first non trivial bound on the circular chromatic number of the plane in a sense defined in Nelson-Hadwiger problem i.e. on $\chi_c(G_0)$.

\begin{thm}\label{main}\label{zero}
\[\chi_c(G_0)\le 4+\frac{4\sqrt{3}}{3}\approx 6.3095\]
\end{thm}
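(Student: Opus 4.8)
The plan is to exhibit an explicit circular colouring, i.e. a function $c\colon\rr^2\to[0,r)$ with $r=4+\tfrac{4\sqrt3}{3}$ such that $1\le|c(p)-c(q)|\le r-1$ (the inequalities read on the circle of perimeter $r$) whenever $d(p,q)=1$. I would build $c$ from a periodic decomposition of the plane. The appearance of $\sqrt3$ makes a hexagonal/triangular lattice the natural choice: fix a small parameter $\ell$ and tile $\rr^2$ by translates of a single region of diameter strictly less than $1$, laid out on a lattice for which the operations $\lfloor\cdot\rfloor_\ell$ and $(\cdot)_\ell$ defined above read off "which tile" and "where inside the tile" a given point lies. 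On most of each tile the colour should be a constant drawn from a finite colour table, but to beat the trivial value $7$ one cannot simply compress the $7$-colouring — seven unit arcs do not fit pairwise disjointly in a circle of perimeter below $7$ — so some tiles must instead carry a colour that varies continuously, an arc that slides around $\rr/r\rr$ as the point moves across the tile, arranged so that two "nominally equal" colours never actually meet at distance exactly $1$.

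The verification would then be reduced to a finite case analysis. Since every tile has diameter $<1$, two points $p,q$ with $d(p,q)=1$ lie in tiles whose positions differ by a lattice vector of bounded length, so only finitely many relative tile configurations occur. For each such configuration one must check that the arcs $[c(p),c(p)+1)$ and $[c(q),c(q)+1)$ are disjoint on the circle of perimeter $r$. Where both tiles are monochromatic this is read off the colour table; where one or both colours slide, one bounds the range of $c$ over the relevant portion of the tile — an interval whose length is controlled by $\ell$ and the lattice geometry — and compares it, using $d(p,q)=1$, against the circular gap required. I expect this step to be mostly bookkeeping once the colouring is fixed, though it is the place where the exact value $r=4+\tfrac{4\sqrt3}{3}$ gets certified.

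The genuine difficulty is the design of the colouring and the choice of $\ell$: one is simultaneously packing arcs around a circle of perimeter $r$ so that every neighbouring tile pair is satisfied — including the extremal "just touching" pairs, where $d(p,q)=1$ is realised on the boundaries of two interacting tiles and thus pins down both the tile dimensions and $r$ — while pushing $r$ down to $4+\tfrac{4\sqrt3}{3}$. That number presumably records the optimum of this packing: a "$4$" worth of essentially discrete colour behaviour plus a "$\tfrac{4\sqrt3}{3}$" worth of slack forced by the triangular geometry (the height $\tfrac{\sqrt3}{2}$ of a unit equilateral triangle, suitably scaled). I would first extract the tile dimensions from the extremal constraints, then solve for the minimal $r$ admitting a consistent arc assignment, and only afterwards carry out the finite check above; the Exoo-type variant promised in the abstract should follow from the same construction with the distance-$1$ constraint relaxed to an interval $[1-\varepsilon,1+\varepsilon]$, which only shrinks the set of interacting tiles.
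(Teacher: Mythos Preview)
Your meta-strategy---an explicit periodic $c\colon\rr^2\to[0,r)$ followed by a finite case split over relative positions---is exactly the paper's. But you have not supplied the colouring, and you yourself flag its design as the ``genuine difficulty''; that is the gap, and the direction you sketch (hexagonal tiles of diameter $<1$, mostly monochromatic with a few sliding) is not the one the paper takes.

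The paper's colouring is \emph{nowhere} constant. Set $\ell=2+2\sqrt3$ and on the strip $\rr\times[0,\tfrac12)$ let $c(x,y)=\tfrac{2\sqrt3}{3}\,(x)_\ell$, a linear ramp in $x$ wrapping around the circle of perimeter $r=\tfrac{2\sqrt3}{3}\,\ell=4+\tfrac{4\sqrt3}{3}$; then stack copies of this strip vertically, each shifted right by $1+\tfrac{\sqrt3}{2}$ relative to the one below, so in closed form
\[
c(x,y)=\tfrac{2\sqrt3}{3}\bigl(x-(2+\sqrt3)\lfloor y\rfloor_{1/2}\bigr)_\ell.
\]
The fundamental rectangle has width $\ell>5$, so there is no small-diameter tile in sight; the scale $\tfrac{2\sqrt3}{3}=(\tfrac{\sqrt3}{2})^{-1}$ is chosen precisely so that two points in the \emph{same} strip at distance $1$ (forcing $|x_1-x_2|>\tfrac{\sqrt3}{2}$ since $|y_1-y_2|<\tfrac12$) already have colour gap $>1$, and the horizontal shift handles cross-strip pairs. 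Thus $r$ is not ``$4$ discrete $+\,\tfrac{4\sqrt3}{3}$ slack'' as you speculate, but simply the wrap-around period of the ramp. After translating so that $x_2=0$ and $y_2\in[0,\tfrac12)$, the verification collapses to six one-line interval checks indexed by $\lfloor y_1\rfloor_{1/2}\in\{-1,-\tfrac12,0,\tfrac12,1\}$ (the case $0$ split by the sign of $x_1$)---much shorter than your plan anticipates, once the linear-ramp idea is in hand.
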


\begin{proof}

We not only give the bound on the circular chromatic number but actually  present the $(4+\frac{4\sqrt{3}}{3})$-circular coloring of the plane.
Let $\ell=2+2\sqrt{3}$ and let $r=\frac{2\sqrt{3}}{3}\ell=4+\frac{4\sqrt{3}}{3}$.

Let us start with some intuition on the construction  a $r$-circular coloring of the plane. 
Let $R$ denote a rectangle $[0, \ell)\times [0,\frac{1}{2})$.  We define $r$-circular coloring of the rectangle $R$ by $c(x,y)=\frac{2\sqrt{3}}{3}x$, (where $(x,y)\in R$). Then we extend this coloring in a circular way on a strip $S=\rr\times [0,\frac{1}{2})$. We simply join copies of the rectangle $R$ by their vertical sides so they form a strip $S$. 
Each copy of $R$ is colored in the same way as the original rectangle $R$. 
Then we take copies of the strip $S$ and join them with horizontal sides.  Each strip $S$ is colored in the same way, but we  shift each copy of $S$  by $(1+\frac{\sqrt{3}}{2})$ to the right comparing with the strip below (see Figures \ref{kontur}-\ref{czb}).

\begin{center}

\begin{figure}[h]
\includegraphics[width=18cm]{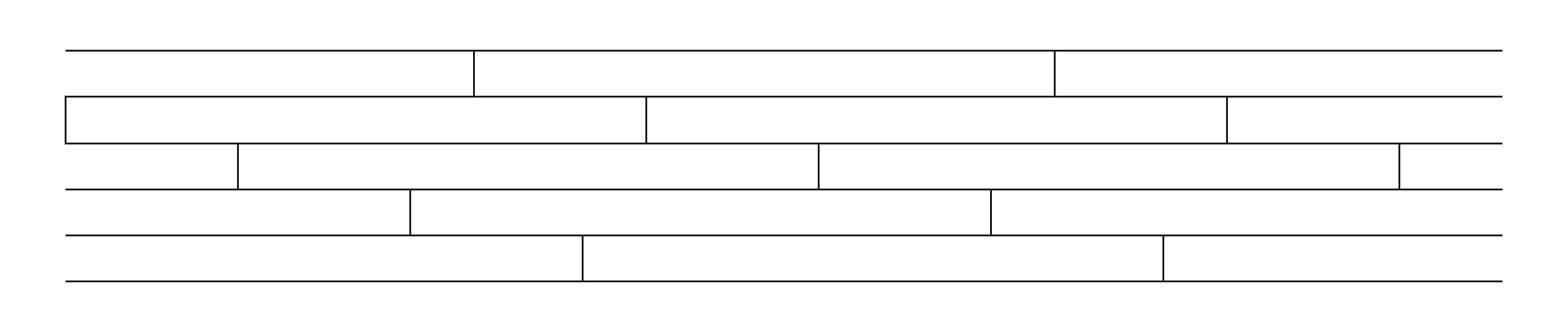}
\caption{Partition of the plane into copies of rectangle $R$} \label{kontur}
\end{figure}

\begin{figure}[h]
\includegraphics[width=18cm]{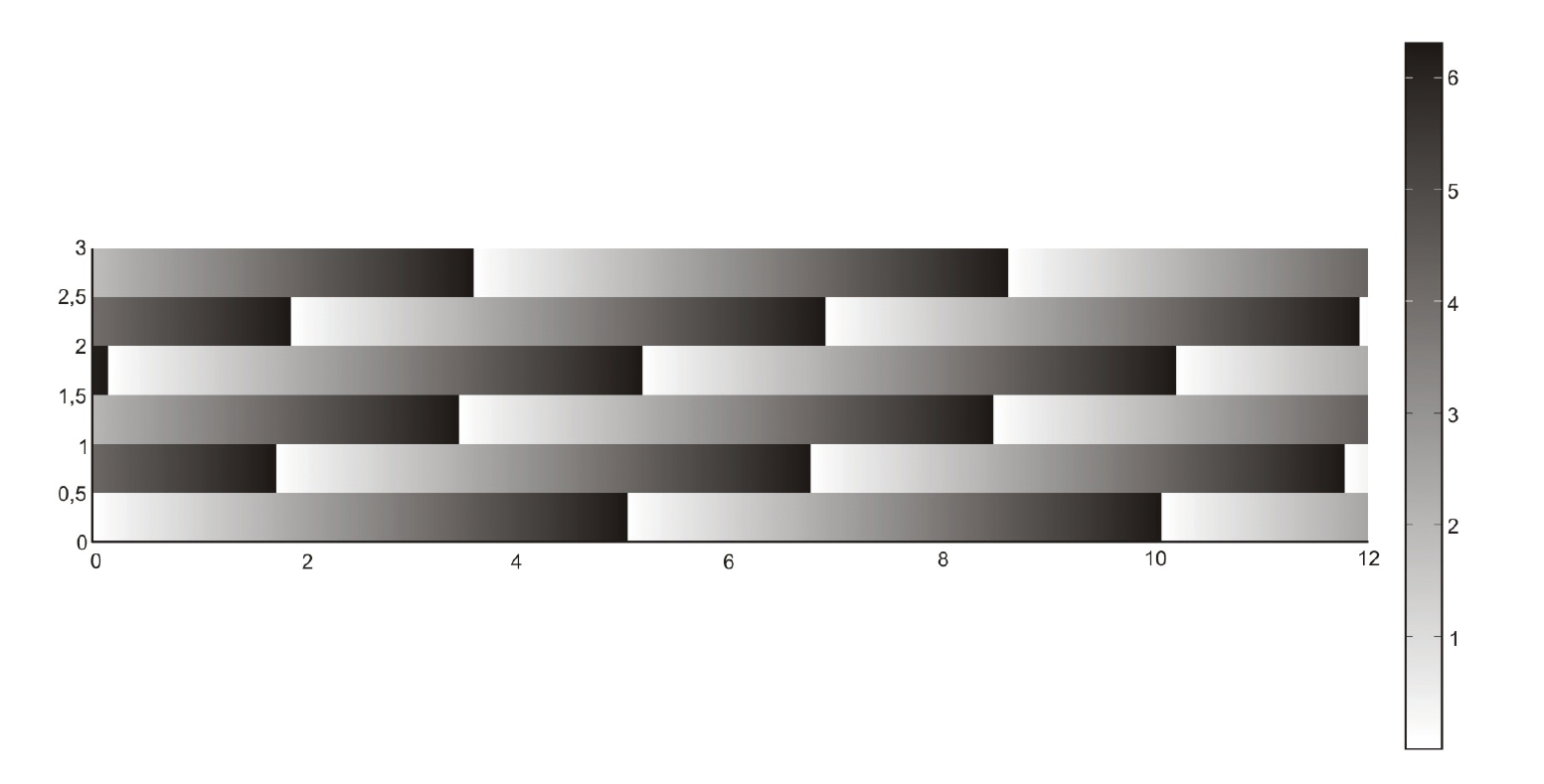} 
\caption{$(4+\frac{4\sqrt{3}}{3})$-circular coloring of the plane, black and white}\label{czb}
\end{figure}

\begin{figure}[h]
\includegraphics[width=18cm]{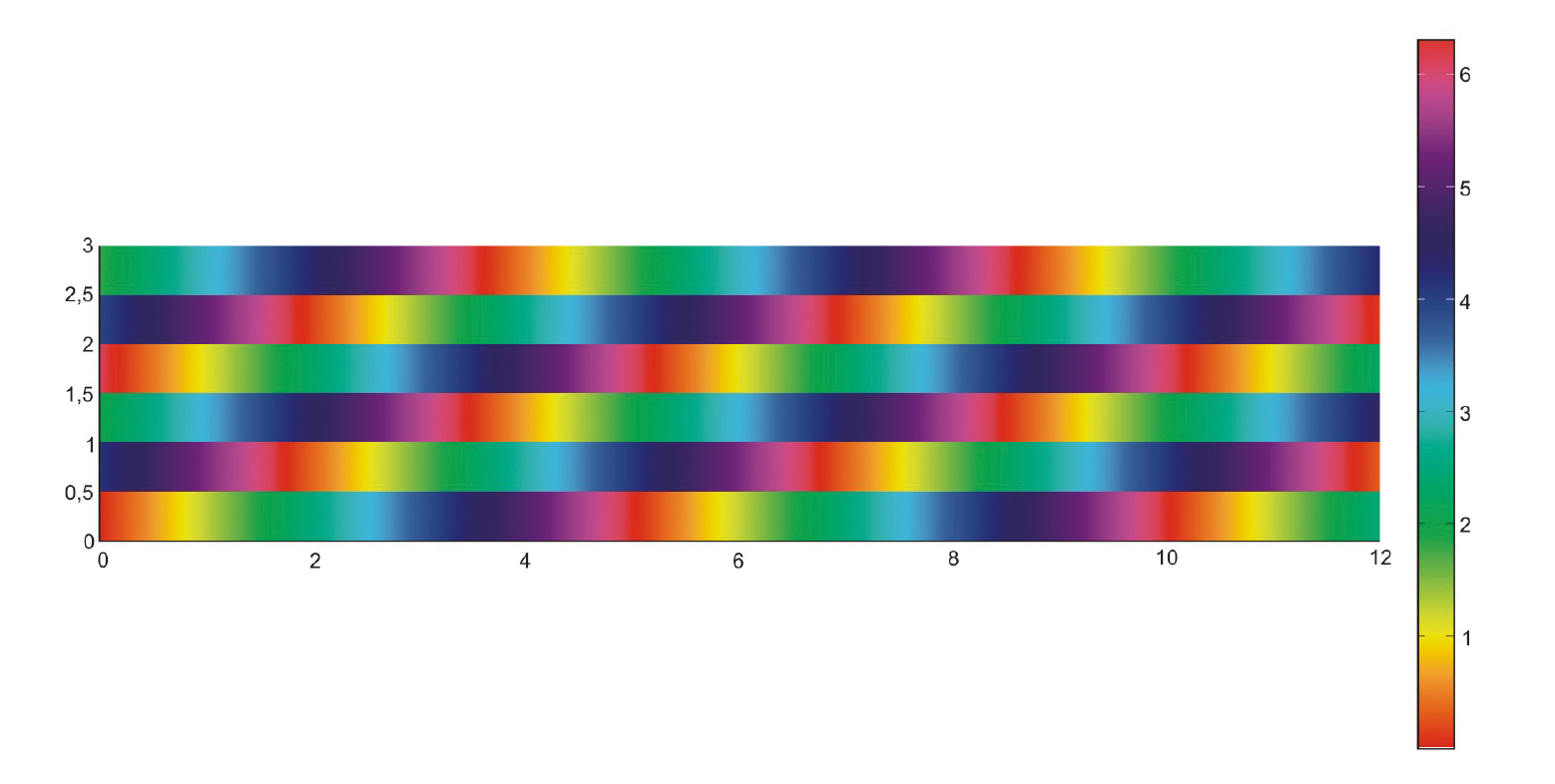}
\caption{$(4+\frac{4\sqrt{3}}{3})$-circular coloring of the plane, color } \label{kolor}
\end{figure}

\end{center}

 Formally the $r$-circular coloring of the graph $G_0$ is defined by:   
 
$$c(x,y)=\frac{2\sqrt{3}}{3}\left(x -(2+\sqrt{3})\lfloor y\rfloor_{\frac{1}{2}} \right)_\ell$$

To prove that $c$ is a circular coloring of $G_0$  we show that for any pair of points $(x_1,y_1)$ and $(x_2,y_2)$ at distance 1 the the condition $1\le |c(x_1,y_1)-c(x_2,y_2)|\le r-1$ is fulfilled. Without loss of generality we assume that  $c(x_1,y_1)\ge c(x_2,y_2)$ i.e $(x_1-(2+\sqrt{3})\lfloor y_1\rfloor _\frac{1}{2})_\ell \ge (x_2-(2+\sqrt{3})\lfloor y_2\rfloor _\frac{1}{2})_\ell$. Let $(x'_1,y'_1)=(x_1-x_2,y_1-\lfloor y_2\rfloor _\frac{1}{2})$ and $(x'_2,y'_2)=(0,y_2-\lfloor y_2\rfloor _\frac{1}{2})$. Notice that $d((x'_1,y'_1),(x'_2,y'_2))=d((x_1,y_1),(x_2,y_2))$. 

Moreover using fact that fact that $\lfloor y_2-\lfloor y_1\rfloor_\frac{1}{2}\rfloor _\frac{1}{2}=\lfloor y_2\rfloor _\frac{1}{2}-\lfloor y_1\rfloor_\frac{1}{2}$ and $(a-b)_\ell=(a)_\ell-(b)_\ell$ for $(a)_\ell\ge (b)_\ell$ we obtain: 

$$|c(x'_1,y'_1)-c(x'_2,y'_2)|= \frac{2\sqrt{3}}{3}\left|(x_1-x_2-(2+\sqrt{3})\lfloor y_1-\lfloor y_2\rfloor _\frac{1}{2} \rfloor_\frac{1}{2})_\ell-0\right|=$$
$$=\frac{2\sqrt{3}}{3}\left|(x_1-x_2-(2+\sqrt{3})(\lfloor y_1 \rfloor_\frac{1}{2}-\lfloor y_2\rfloor _\frac{1}{2}))_\ell\right|=
\frac{2\sqrt{3}}{3}\left|(x_1-(2+\sqrt{3})\lfloor y_1 \rfloor_\frac{1}{2}-(x_2-(2+\sqrt{3}))\lfloor y_2\rfloor _\frac{1}{2}))_\ell\right|=$$
$$=\frac{2\sqrt{3}}{3}\left|(x_1-(2+\sqrt{3})\lfloor y_1\rfloor _\frac{1}{2}))_\ell-(x_2-(2+\sqrt{3})\lfloor y_2 \rfloor_\frac{1}{2})_\ell\right|=\left|c(x_1,y_1)-c(x_2,y_2)\right|$$

Therefore  without loss of generality we may assume that $x_2=0$ and $y_2\in [0,\frac{1}{2})$. Recall that $d((x_1,y_1),(x_2,y_2))=1$.

It remains to prove that $|c(x_1,y_1)-c(x_2,y_2)|=|c(x_1,y_1)|\subseteq [1,3+\frac{4\sqrt{3}}{3}]$.  Consider following cases:

Case 1: $\lfloor y_1\rfloor _\frac{1}{2}=0, x_1>0$.  
In this case $x_1\in (\frac{\sqrt{3}}{2},1]$ and $c(x_1,y_1)\in (1,\frac{2\sqrt{3}}{3}]\subseteq [1,3+\frac{4\sqrt{3}}{3}]$

Case 2: $\lfloor y_1\rfloor _\frac{1}{2}=0, x_1<0$.  
In this case $x_1\in [-1, -\frac{\sqrt{3}}{2})$ and $c(x_1,y_1)\in [4+\frac{2\sqrt{3}}{3}, 3+\frac{4\sqrt{3}}{3})\subseteq [1,3+\frac{4\sqrt{3}}{3}]$

Case 3: $\lfloor y_1\rfloor _\frac{1}{2}=\frac{1}{2}$.  
In this case $x_1\in [-1, 1]$ and $c(x_1,y_1)\in [3,3+\frac{4\sqrt{3}}{3} ]\subseteq [1,3+\frac{4\sqrt{3}}{3}]$

Case 4: $\lfloor y_1\rfloor _\frac{1}{2}=1$.  
In this case $x_1\in [-\frac{\sqrt{3}}{2},\frac{\sqrt{3}}{2}]$ and $c(x_1,y_1)\in [1,\frac{5}{2}]\subseteq [1,3+\frac{4\sqrt{3}}{3}]$

Case 5: $\lfloor y_1\rfloor _\frac{1}{2}=-\frac{1}{2}$.  
In this case $x_1\in [-1, 1]$ and $c(x_1,y_1)\in [1,1+\frac{4\sqrt{3}}{3}]\subseteq [1,3+\frac{4\sqrt{3}}{3}]$

Case 6: $\lfloor y_1\rfloor _\frac{1}{2}=-1$.  
In this case $x_1\in [-\frac{\sqrt{3}}{2},\frac{\sqrt{3}}{2}]$ and $c(x_1,y_1)\in [1+\frac{4\sqrt{3}}{3},\frac{3}{2}+\frac{4\sqrt{3}}{3}]\subseteq [1,3+\frac{4\sqrt{3}}{3}]$

% 
%\begin{center}
%\begin{figure}[h]
%
%\begin{picture}(100,100)(-50,00)
%
%\put(0,0){\line(1,0){350}}
%\put(0,15){\line(1,0){350}}
%\put(0,30){\line(1,0){350}}
%\put(0,45){\line(1,0){350}}
%\put(0,60){\line(1,0){350}}
%\put(0,75){\line(1,0){350}}
%\put(0,90){\line(1,0){350}}
%
%\put(0,0){\line(0,1){15}}
%\put(165,0){\line(0,1){15}}
%\put(330,0){\line(0,1){15}}
%\put(55,15){\line(0,1){15}}
%\put(220,15){\line(0,1){15}}
%\put(110,30){\line(0,1){15}}
%\put(275,30){\line(0,1){15}}
%\put(0,45){\line(0,1){15}}
%\put(165,45){\line(0,1){15}}
%\put(330,45){\line(0,1){15}}
%\put(55,60){\line(0,1){15}}
%\put(220,60){\line(0,1){15}}
%\put(110,75){\line(0,1){15}}
%\put(275,75){\line(0,1){15}}
%
%\end{picture}
%\caption{Partitions of the plane into copies of the rectangle $R$}\label{paski}
%\end{figure}\end{center}
%
%\begin{center}
%
%\begin{figure}[h]
%
%\includegraphics[width=18cm]{plot3.png} \label{figure}
%\caption{$(4+\frac{4\sqrt{3}}{3})$-circular coloring of the plane}
%\end{figure}
%\end{center}
%

\end{proof}

The method of  circular coloring defined in Theorem \ref{main} can be also used to color in a circular way  graphs $G_\varepsilon$. For small values of $\varepsilon$ the number $\chi_c(G_\varepsilon)$ is strictly smaller than the known bound on the  classic chromatic number of $G_\varepsilon$.

\begin{thm} For $\varepsilon\le\frac{1}{23}(53 - 6 \sqrt{71})\approx 0.1062$ holds 
\[\chi_c(G_\varepsilon)\le 4+\frac{4+4\varepsilon}{\sqrt{3\varepsilon^2-10\varepsilon+3}}<7\]
\end{thm}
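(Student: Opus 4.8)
The plan is to mimic the construction from Theorem~\ref{main}, but with the scaling parameters chosen as functions of $\varepsilon$. Set $\ell=\ell(\varepsilon)$ to be the horizontal period of the rectangle, let the vertical period be some $h=h(\varepsilon)$ (playing the role of $\frac12$), let the horizontal shift between consecutive strips be $s=s(\varepsilon)$, and take the coloring to be $c(x,y)=\lambda\bigl(x-s\lfloor y\rfloor_h\bigr)_\ell$ with $\lambda=r/\ell$, where $r=4+\frac{4+4\varepsilon}{\sqrt{3\varepsilon^2-10\varepsilon+3}}$ is the target perimeter. Exactly as in the proof of Theorem~\ref{main}, the floor-shift identities reduce the verification to points $(x_2,y_2)=(0,y_2)$ with $y_2\in[0,h)$ and $(x_1,y_1)$ at distance $\delta\in[1-\varepsilon,1+\varepsilon]$ from it; one then must check that $\lambda\,\bigl|(x_1-s\lfloor y_1\rfloor_h)_\ell\bigr|$ lies in $[1,r-1]$ for every such $(x_1,y_1)$.

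The first real step is to choose $\ell$, $h$, $s$ correctly. The horizontal band $\lfloor y_1\rfloor_h=0$ forces $|x_1|\le 1+\varepsilon$; to keep the two arcs disjoint we need the gap between $x_1$-intervals on the positive and negative side to be wide enough, which gives a lower bound on $\ell$ in terms of the worst-case horizontal reach $\sqrt{(1+\varepsilon)^2-\min(\text{vertical offset})^2}$. The vertical shift must be tuned so that a point one band up (or down), whose horizontal displacement ranges over an interval determined by $\varepsilon$ and the band width $h$, lands in a safe colour window after subtracting $s$; and bands two or more away must be forced out of reach entirely, i.e. $2h-\text{(something)}>1+\varepsilon$, or else handled by a similar window argument. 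Optimising these constraints simultaneously is what produces the specific algebraic values; I expect $h$ and $s$ to come out as the analogues of $\frac12$ and $1+\frac{\sqrt3}{2}$, rescaled, and $\ell=\frac{2(1+\varepsilon)}{\sqrt{3\varepsilon^2-10\varepsilon+3}}\cdot 2$ or similar, so that $r=\frac{2\sqrt3}{3}\ell$ is replaced by $r=\frac{2(1+\varepsilon)}{\sqrt{\,\cdot\,}}\cdot 2 + 4$. I would first write down the four or five band cases ($\lfloor y_1\rfloor_h \in\{0,\pm h,\pm 2h\}$), extract from each the inequality it imposes, and then solve the resulting system for the threshold on $\varepsilon$; the bound $\varepsilon\le\frac1{23}(53-6\sqrt{71})$ should emerge as the place where two of these constraints become tight simultaneously (equivalently, where the discriminant-type expression $3\varepsilon^2-10\varepsilon+3$ and the band-separation condition meet), so I would verify that $\frac1{23}(53-6\sqrt{71})$ is a root of the relevant quadratic obtained by equating two case bounds.

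After the parameters are fixed, the bulk of the proof is the same case analysis as in Theorem~\ref{main}: for each value of $\lfloor y_1\rfloor_h$, bound the possible range of $x_1$ using $d((x_1,y_1),(0,y_2))\in[1-\varepsilon,1+\varepsilon]$ and $y_2\in[0,h)$, then compute the resulting interval for $c(x_1,y_1)$ and check containment in $[1,r-1]$. Finitely many bands matter because once $|\lfloor y_1\rfloor_h|$ is large enough the vertical distance alone exceeds $1+\varepsilon$. The final inequality $r<7$ is a one-variable calculus check: show $4+\frac{4+4\varepsilon}{\sqrt{3\varepsilon^2-10\varepsilon+3}}<7$ on $[0,\tfrac1{23}(53-6\sqrt{71})]$, i.e. $(4+4\varepsilon)^2<9(3\varepsilon^2-10\varepsilon+3)$, which rearranges to a quadratic in $\varepsilon$ that is negative on the stated range (indeed the right endpoint is presumably exactly where equality $r=7$ would hold, which is another way to see where the constant $\frac1{23}(53-6\sqrt{71})$ comes from).

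The main obstacle is the simultaneous optimisation in the first step: unlike Theorem~\ref{main}, where $\varepsilon=0$ makes the geometry rigid and the numbers $2+2\sqrt3$, $1+\frac{\sqrt3}{2}$ essentially forced, here one has a genuine three-parameter family $(\ell,h,s)$ and must argue that the stated $r$ is what the best choice yields — in particular that no band-case inequality is violated for the claimed range of $\varepsilon$. Concretely, the delicate point is the band $\lfloor y_1\rfloor_h=\pm h$: there $x_1-s$ ranges over an interval whose endpoints depend on $\varepsilon$ through both $\ell$ and $h$, and one must ensure this interval, scaled by $\lambda$, stays inside $[1,r-1]$ and does not wrap around the circle. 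I would handle this by writing the left endpoint of the band-$h$ colour interval as a function of $\varepsilon$ and checking it is $\ge 1$ exactly when $\varepsilon\le\frac1{23}(53-6\sqrt{71})$; once that single inequality is pinned down, the remaining cases should be slack and follow by the same routine estimates used in the proof of Theorem~\ref{main}.
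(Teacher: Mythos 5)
Your outline does follow the same template as the paper: the paper's proof uses exactly the coloring $c(x,y)=\frac1a\bigl(x-s\lfloor y\rfloor_h\bigr)_\ell$ with $a=\frac12\sqrt{3\varepsilon^2-10\varepsilon+3}$, band height $h=\frac{1+\varepsilon}{2}$, shift per band $s=1+\varepsilon+a$, period $\ell=2+2\varepsilon+4a$ and $r=\ell/a$, together with the same floor-shift reduction to $x_2=0$, $y_2\in[0,h)$ and a six-case band analysis. But as written your text is a plan rather than a proof: the explicit choice of $(\ell,h,s)$, the case-by-case verification, and the derivation of the admissible range of $\varepsilon$ are all deferred to an unspecified ``simultaneous optimisation,'' and that is precisely where all the content of the theorem lies.

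Moreover, the two concrete anchors you do commit to are wrong. First, the threshold $\frac1{23}(53-6\sqrt{71})\approx0.1062$ is not where $r=7$: your own inequality $(4+4\varepsilon)^2<9(3\varepsilon^2-10\varepsilon+3)$ simplifies to $11\varepsilon^2-122\varepsilon+11>0$, i.e.\ $\varepsilon<\frac1{11}\approx0.0909$, whereas the stated constant is the smaller root of $23\varepsilon^2-106\varepsilon+11=0$; at $\varepsilon=0.1$ one already has $r\approx7.09$, so carrying out your check would have produced a contradiction with the claimed range rather than an explanation of it. Second, the delicate case is not the band $\lfloor y_1\rfloor_h=\pm h$: with the parameters above those windows come out as $[3,r-1]$ and $\bigl[1,1+\frac{2+2\varepsilon}{a}\bigr]$, safely inside $[1,r-1]$ (the endpoint $1$ is tight but $\varepsilon$-independent). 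The genuinely critical case is two bands away, $\lfloor y_1\rfloor_h=\pm2h$: there the vertical offset can be barely larger than $h$, so the horizontal reach is controlled by the \emph{maximal} admissible distance, $|x_1|<\sqrt{(1+\varepsilon)^2-h^2}=\frac{\sqrt3}{2}(1+\varepsilon)$, which for $\varepsilon>0$ strictly exceeds the $\varepsilon=0$-style bound $a=\sqrt{(1-\varepsilon)^2-h^2}$, and keeping the resulting colour window (of width greater than $2$) inside $[1,r-1]$ is exactly the constraint your sketch never confronts. Be warned that you also cannot settle it by ``analogy with Theorem~\ref{main} plus routine slack'': the paper's own Cases 4 and 6 write the range $[-a,a]$ there, i.e.\ the $1-\varepsilon$ reach, so this step needs a genuine argument (or an adjustment of $h$, $s$, or $r$), not a citation. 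Until you fix the parameters, redo the two-band case with the $1+\varepsilon$ reach, and show how the quadratic $23\varepsilon^2-106\varepsilon+11$ actually arises, the proposal does not establish the theorem.
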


\begin{proof} The proof is analogous to the prove of Theorem \ref{zero}. We partition the plane into copies of a rectangle, and color each copy in the same way. The difference is the size of rectangle and the factor in the coloring function. Let  $\ell=2+2\varepsilon+2\sqrt{3\varepsilon^2-10\varepsilon+3}$, $r= 4+\frac{4+4\varepsilon}{\sqrt{3\varepsilon^2-10\varepsilon+3}}$.  To prove the Theorem we define a $r$-circular coloring of the $G_\epsilon$ by :   
 
$$c(x,y)=\frac{2}{\sqrt{3\varepsilon^2-10\varepsilon+3}}\left(x -\left(1+\frac{2\sqrt{3\varepsilon^2-10\varepsilon+3}}{1+\varepsilon}\right)\lfloor y\rfloor_{\frac{1+\varepsilon}{2}} \right)_\ell$$

 To make formulas shorter let $a=\frac{1}{2}\sqrt{3\varepsilon^2-10\varepsilon+3}$. Then $r=4+\frac{2+2\varepsilon}{a}$ and  $$c(x,y)=\frac{1}{a}\left(x-\left(1+\varepsilon+a\right)\frac{2}{1+\varepsilon}\lfloor y\rfloor_\frac{1+\varepsilon}{2}\right)_\ell.$$ 
 
To prove that $c$ is a circular coloring of $G_\varepsilon$  we show that for any pair of points $(x_1,y_1)$ and $(x_2,y_2)$ at distance in $[1-\varepsilon,1+\varepsilon]$ the condition  $1\le |c(x_1,y_1)-c(x_2,y_2)|\le r-1$ holds. Without loss of generality we assume that  $c(x_1,y_1)\ge c(x_2,y_2)$ i.e $$\left(x_1 -\left(1+\varepsilon+a\right)\frac{2}{1+\varepsilon}\lfloor y_1\rfloor_{\frac{1+\varepsilon}{2}} \right)_\ell\ge \left(x_2 -\left(1+\varepsilon+a\right)\frac{2}{1+\varepsilon}\lfloor y_2\rfloor_{\frac{1+\varepsilon}{2}} \right)_\ell$$

 Let $(x'_1,y'_1)=(x_1-x_2,y_1-\lfloor y_2\rfloor_{\frac{1+\varepsilon}{2}})$ and $(x'_2,y'_2)=(0,y_2-\lfloor y_2\rfloor_{\frac{1+\varepsilon}{2}})$. Notice that $d((x'_1,y'_1),(x'_2,y'_2))=d((x_1,y_1),(x_2,y_2))$. 

Moreover, similarly as in the proof above, one can prove $|c(x'_1,y'_1)-c(x'_2,y'_2)|= \left|c(x_1,y_1)-c(x_2,y_2)\right|$. 
Therefore  without loss of generality we may assume that $x_2=0$ and $y_2\in [0,\frac{1+\varepsilon}{2})$.

Recall that $d((x_1,y_1),(x_2,y_2))\in [1-\varepsilon,1+\varepsilon]$.
It remains to prove that $|c(x_1,y_1)-c(x_2,y_2)|=|c(x_1,y_1)|\subseteq [1,r-1]$.  Consider following cases:

Case 1: $\lfloor y_1\rfloor _\frac{1+\varepsilon}{2}=0, x_1>0$.  
In this case $x_1\in (a,1+\varepsilon]$ and $c(x_1,y_1)\in (1,\frac{1+\varepsilon}{a}]\subseteq [1,r-1]$

Case 2: $\lfloor y_1\rfloor _\frac{1+\varepsilon}{2}=0, x_1<0$.  
In this case $x_1\in [-1-\varepsilon, -a)$ and $c(x_1,y_1)\in [r-\frac{1+\varepsilon}{a}, r-1)\subseteq [1,r-1]$

Case 3: $\lfloor y_1\rfloor _\frac{1+\varepsilon}{2}=\frac{1+\varepsilon}{2}$.  
In this case $x_1\in [-(1+\varepsilon), 1+\varepsilon]$ and $c(x_1,y_1)\in [3, r-1]\subseteq [1,r-1]$

Case 4: $\lfloor y_1\rfloor _\frac{1+\varepsilon}{2}=1+\varepsilon$.  
In this case $x_1\in [-a,a]$ and $c(x_1,y_1)\in [1,3]\subseteq [1,r-1]$

Case 5: $\lfloor y_1\rfloor _\frac{1+\varepsilon}{2}=-\frac{1+\varepsilon}{2}$.  
In this case $x_1\in [-1-\varepsilon, 1+\varepsilon]$ and $c(x_1,y_1)\in [1,1+\frac{2(1+\varepsilon)}{a}]\subseteq [1,r-1]$

Case 6: $\lfloor y_1\rfloor _\frac{1+\varepsilon}{2}=-1$.  
In this case $x_1\in [-a,a]$ and $c(x_1,y_1)\in [1+\frac{2(1+\varepsilon)}{a},3+\frac{2(1+\varepsilon)}{a}]\subseteq [1,r-1]$

\end{proof}

\end{document}